\newtheorem{lemma}{Lemma}
\newtheorem{proposition}[lemma]{Proposition}
\newtheorem{corollary}[lemma]{Corollary}
\newtheorem{theorem}[lemma]{Theorem}
\theoremstyle{definition}
\newtheorem{definition}[lemma]{Definition}
\theoremstyle{remark} 
\newtheorem*{remark}{Remark}
\newtheorem*{example}{Example}
\newcommand{\A}{\mathbb{A}}
\newcommand{\C}{\mathbb{C}}
\newcommand{\G}{\mathbb{G}}
\renewcommand{\P}{\mathbb{P}}
\newcommand{\R}{\mathbb{R}}
\newcommand{\T}{\mathbb{T}}
\newcommand{\Z}{\mathbb{Z}}
\newcommand{\cL}{\mathcal{L}}
\DeclareMathOperator{\Hom}{Hom}
\DeclareMathOperator{\Mod}{Mod}
\DeclareMathOperator{\Coh}{Coh}
\DeclareMathOperator{\Sh}{Sh}
\DeclareMathSymbol{\shortminus}{\mathbin}{AMSa}{"39}
\newcommand{\mirrorline}{{\bigcirc \!\!\! \shortminus}}
\newcommand{\mapsfrom}{\mathrel{\reflectbox{\ensuremath{\mapsto}}}}
\begin{document}


\title{Toric mirror symmetry revisited}

\author{Vivek Shende}

\begin{abstract}
The Cox construction presents a toric variety as a quotient of affine space by a torus.  The category of coherent
sheaves on the corresponding stack thus has an evident description as invariants in a quotient of the category of 
modules over a polynomial ring.  Here we give the mirror to this description, and in particular, 
a clean new proof of mirror symmetry for smooth toric stacks. 
\end{abstract}

\maketitle


\thispagestyle{empty}

\section{Introduction}

Let $\G_m^n \circlearrowright \A^n$ be the standard action, $G \subset \G_m^n$ a subgroup, and 
$Z \subset \A^n$ a closed invariant subset.  The  coherent sheaf category on 
the (stack) quotient  $Y = (\A^n \setminus Z) / G$ can be calculated as:  
\begin{equation} \label{coh from quotient} \mathrm{Coh}(Y) = (\mathrm{Coh}(\A^n) / \mathrm{Coh}(Z))^{G} \end{equation}

The purpose of this note is to translate Equation (\ref{coh from quotient}) through homological mirror symmetry for monomial $Z$, 
(the case relevant to the construction of toric varieties \cite{Cox}), 
and in particular give a new proof of homological mirror symmetry for smooth toric stacks.  

Some brief historical remarks.  
The mirror to coherent sheaves on a toric variety is a 
partially wrapped Fukaya category of a torus dual to $\T = \G_m^n / G$.  When $Y$ is smooth, the wrapping 
can be described in terms of a `superpotential' function on the torus \cite{Hori-Vafa}; more generally, the stop for the wrapping
is a certain singular Legendrian introduced in \cite{FLTZ2, FLTZ3}.  There are several approaches to this mirror symmetry: 
the microlocal sheaf theoretic approach of \cite{Bo, FLTZ2, FLTZ3, Treumann} is completed, first by \cite{Ku}, and also by logically independent methods
in \cite{Zhou-ccc} and \cite{Vaintrob}; meanwhile \cite{A1, A2, Hanlon, Hanlon-Hicks} provides a direct Floer theoretic approach.
(Comparing sheaf and Floer results requires the foundational \cite{GPS1, GPS2, GPS3} together with the Lagrangian skeleton 
calculations of \cite{Gammage-Shende, Zhou-skel}.)

Here we give yet another proof, logically independent of all previous ones, and perhaps more conceptual -- we only need hands-on 
calculations of categories for mirror symmetry for $\A^1$.  In particular, we will discover, rather than posit, the conic Lagrangian 
of \cite{FLTZ2, FLTZ3}.  
In addition we will see clearly on the $A$-side the significance of asking for a simplicial fan. 
We work with microlocal sheaf theory; one can pass to Fukaya categories by the comparison result \cite{GPS3}. 
It would also be straightforward to translate our methods to work directly with Fukaya categories.   

Finally, let us mention that toric mirror symmetry often serves as a useful building block
for various further constructions \cite{Nadler-LG, Gammage-Shende, Gammage-Shende-2, Gammage, Gammage-2}. 

\vspace{2mm} {\bf Conventions.} 
We always work with the stable $\infty$-categories given by localizing
categories of complexes along quasi-isomorphisms; we always use the `homotopy' or `derived' versions of functors as appropriate
for this context.  Throughout we work over some fixed commutative ring $k$. 
We write $\Coh$ to mean $\mathrm{IndCoh}$ (or $\mathrm{QCoh}$ as we only work with smooth spaces); the usual notion is recovered by taking compact objects. 
For foundations see \cite{Lurie-topos, Lurie-algebra, Gaitsgory-Rozenblyum}. 

We use the microlocal sheaf theory as developed originally in \cite{Kashiwara-Schapira}, though we freely make recourse to the advances
in homological algebra since that time.  For $M$ a manifold, $\Lambda \subset T^*M$ a conic Lagrangian, 
we write $\Sh_\Lambda(M)$ for the stable $\infty$-category of (unbounded complexes of) sheaves of $k$-modules with microsupport in $\Lambda$. 

\section{Mirror symmetry for $\A^1$}

Here we record all the calculations we will need in this article; essentially we need to check by hand equivariant mirror symmetry for $\A^1$.  
The proofs of the following lemmas are elementary and sketched or omitted.  

\begin{lemma}
$\Coh(\G_m) \cong \Sh_{T_{S^1}^* S^1}(S^1)$
\end{lemma}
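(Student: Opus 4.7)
The plan is to identify both sides of the equivalence with the stable $\infty$-category of modules over the Laurent polynomial ring $k[t,t^{-1}]$.

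On the $B$-side, $\G_m = \operatorname{Spec} k[t,t^{-1}]$ is affine. Under the conventions of the paper (where $\Coh = \mathrm{QCoh}$ on smooth spaces), the Serre-type equivalence for affine schemes gives
\[
\Coh(\G_m) \simeq \Mod\bigl(k[t,t^{-1}]\bigr),
\]
with the equivalence implemented by global sections. This is standard and requires no work.

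On the $A$-side, my first step is to unwind the microsupport condition. The conic Lagrangian $T^*_{S^1}S^1 \subset T^*S^1$ is the zero section, and by the fundamental microlocal characterization of locally constant sheaves (Kashiwara--Schapira, Proposition 5.4.5), a sheaf has microsupport in the zero section of $T^*M$ if and only if it is locally constant. Hence
\[
\Sh_{T^*_{S^1}S^1}(S^1) \simeq \mathrm{Loc}(S^1;k),
\]
the stable $\infty$-category of (unbounded complexes of) local systems of $k$-modules on $S^1$. Next, since $S^1$ is a $K(\Z,1)$, the $\infty$-category of local systems on $S^1$ is equivalent to the $\infty$-category of functors $B\Z \to \Mod(k)$, which in turn is equivalent to $\Mod(k[\Z]) = \Mod(k[t,t^{-1}])$. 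The equivalence is given by taking the stalk at a basepoint equipped with its monodromy $\Z$-action. Composing with the first step yields the desired equivalence.

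The argument is essentially a composition of two well-known dictionaries, so there is no serious mathematical obstacle; the main point to be careful about is that everything is done $\infty$-categorically with unbounded complexes, so one must invoke the local-systems/monodromy equivalence in the derived sense (e.g.\ via the identification of $\mathrm{Loc}(BG;k)$ with $\Mod(k[G])$ for discrete $G$). Since the rest of the paper will build toric mirror symmetry out of this elementary equivalence, the other thing worth flagging in a full writeup is a choice of orientation/identification that makes the variable $t \in \mathcal{O}(\G_m)$ correspond to the generator of $\pi_1(S^1)$ in the chosen way, so that the equivariant and tensor-product compatibilities used later in the paper hold on the nose.
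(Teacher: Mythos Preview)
Your proof is correct and is essentially the same as the paper's: both sides are identified with $\Mod(k[t,t^{-1}])$, via global sections on the $B$-side and via the stalk-with-monodromy description of local systems on $S^1$ on the $A$-side. The paper's proof is a one-line sketch of exactly this; you have simply spelled out the intermediate steps (microsupport in the zero section $\Leftrightarrow$ locally constant, and $\mathrm{Loc}(S^1)\simeq\Mod(k[\Z])$) more carefully.
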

\begin{proof}
Both are computed by modules over $k[t, t^{-1}]$, this ring being identified as the endomorphisms of the structure sheaf on $\G_m$ or
as the stalk functor at any point of $S^1$. 
\end{proof} 
\begin{remark}
This isomorphism identifies the tensor product monoidal structure on one side with the convolution monoidal structure on the other. 
\end{remark}

We identify $S^1 = \R/\Z$ and write $T_0^- S^1$ for the negative cotangent fiber at zero.  We abbreviate 

$$ \mirrorline := T_{S^1}^* S^1 \cup T_0^- S^1$$

\begin{lemma} \label{lem: mirror line} 
$\Coh(\A^1) \cong \Sh_{\mirrorline}(S^1)$
\end{lemma}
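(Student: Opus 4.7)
My plan is to identify both $\Coh(\A^1)$ and $\Sh_\mirrorline(S^1)$ with $\Mod_{k[t]}$. The algebraic side is automatic: $\A^1 = \mathrm{Spec}\, k[t]$ is affine, so taking global sections gives $\Coh(\A^1) \cong \Mod_{k[t]}$.

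For the sheaf side, I would first observe that any $\mathcal{F} \in \Sh_\mirrorline(S^1)$ is constructible for the stratification $S^1 = \{0\} \sqcup U$ (with $U := S^1 \setminus \{0\}$), because $\mirrorline$ is subordinate to this stratification. The exit-path theorem then identifies the constructible sheaf category with $\mathrm{Fun}(\mathrm{Exit}(S^1), \Mod_k)$. A direct calculation gives $\mathrm{Exit}(S^1) \cong (\bullet \rightrightarrows \bullet)$: the objects are $0$ and a generic point $u \in U$; the space $\mathrm{Mor}(0, u)$ has two contractible components corresponding to entering $U$ to the left or right of $0$; the other morphism spaces are contractible since $U$ is a contractible interval. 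Thus a constructible sheaf is a tuple $(W, V; \alpha_-, \alpha_+)$ with $W = \mathcal{F}_0$, $V = \mathcal{F}|_U$ (constant, since $U$ is contractible), and $\alpha_\pm \colon W \to V$ the two exit-path maps.

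Next I would impose the microsupport constraint. By comparison to the model sheaf $k_{(-\infty, 0]}$ on $\R$ --- whose microsupport at $0$ equals $T_0^- \R$ and whose left-exit map is an isomorphism while its right-exit map is zero --- the condition $SS(\mathcal{F})|_0 \subset T_0^- S^1$ becomes equivalent to $\alpha_-$ being an isomorphism. Using $\alpha_-$ to identify $W \cong V$, the remaining data collapses to a single endomorphism $\beta := \alpha_+ \circ \alpha_-^{-1} \colon V \to V$, which is precisely the datum of a module over $k[\beta] = k[t]$. This gives the desired equivalence $\Sh_\mirrorline(S^1) \cong \Mod_{k[t]} \cong \Coh(\A^1)$.

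The main obstacle is the microlocal matching above: one must pin down carefully which exit-path direction corresponds to the ``stopped'' half of $T_0^* S^1$, and confirm that all other constructible sheaves (with $\alpha_-$ not iso) indeed contribute microsupport in the forbidden positive direction. But this reduces to a standard local computation with the indicator sheaves $k_{[0, \infty)}$ and $k_{(-\infty, 0]}$ on $\R$, together with the base-change/cone calculus of microsupport from \cite{Kashiwara-Schapira}.
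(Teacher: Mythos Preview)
Your proof is correct and follows essentially the same strategy as the paper: both sides are identified with $\Mod_{k[t]}$, the algebraic side trivially and the sheaf side via the stalk at a point of the open stratum. The paper simply asserts that the stalk functor at a nonzero point realizes $\Sh_{\mirrorline}(S^1)$ as $k[t]$-modules; your exit-path description $(W,V,\alpha_-,\alpha_+)$ with the microsupport condition forcing $\alpha_-$ to be invertible is exactly the unpacking of that assertion, with $V$ the stalk and $t$ acting by $\alpha_+\alpha_-^{-1}$.
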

\begin{proof}
Both are computed by modules over $k[t]$; identified variously as the endomorphisms of the structure sheaf, 
the stalk functor at a nonzero point. 
\end{proof} 
\begin{remark}
There are some choices in this isomorphism; in particular, as the categories in question are isomorphic to their
opposites, we could also have taken an isomorphism $\Coh(\A^1) \cong \Sh_{\mirrorline}(S^1)^{op}$
as is done in some references. 
\end{remark}

\begin{corollary}
$\Coh(\A^n) \cong \Sh_{\mirrorline^n}((S^1)^n)$
\end{corollary}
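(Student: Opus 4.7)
The plan is to deduce the corollary from Lemma \ref{lem: mirror line} by a Künneth argument on both sides. Specifically, I would show that each side splits as an $n$-fold tensor product (over $k$) of the corresponding one-dimensional category, and then apply Lemma \ref{lem: mirror line} factor by factor.

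First, on the B-side, I would use that $\A^n = \A^1 \times_{\mathrm{Spec}\, k} \cdots \times_{\mathrm{Spec}\, k} \A^1$, together with the Künneth formula for $\Coh$ (in the $\mathrm{IndCoh}$/$\mathrm{QCoh}$ sense adopted in the conventions) for smooth schemes over $k$: this gives a symmetric monoidal identification
\[
\Coh(\A^n) \;\simeq\; \Coh(\A^1)^{\otimes_k n}.
\]
Concretely, this can be checked at the level of generators and endomorphisms, since both sides are modules over $k[t_1, \dots, t_n] = k[t]^{\otimes n}$ and the functor $- \boxtimes -$ matches tensor products of free modules.

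Second, on the A-side, I would invoke the corresponding Künneth theorem for microlocal sheaves: for reasonable conic Lagrangians $\Lambda_i \subset T^* M_i$,
\[
\Sh_{\Lambda_1}(M_1) \otimes_k \Sh_{\Lambda_2}(M_2) \;\simeq\; \Sh_{\Lambda_1 \times \Lambda_2}(M_1 \times M_2),
\]
where the product Lagrangian sits in $T^*(M_1 \times M_2) = T^* M_1 \times T^* M_2$. Applied inductively to $M_i = S^1$ and $\Lambda_i = \mirrorline$, this yields
\[
\Sh_{\mirrorline}(S^1)^{\otimes_k n} \;\simeq\; \Sh_{\mirrorline^n}((S^1)^n).
\]
Combining with the B-side Künneth and Lemma \ref{lem: mirror line} then gives the desired equivalence.

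The main obstacle is justifying the microlocal Künneth statement: one must know that exterior product of sheaves with prescribed microsupport yields a sheaf whose microsupport lies in the product Lagrangian, and conversely that this operation generates the target category under colimits. For the Lagrangian $\mirrorline$ this is especially clean, since $\Sh_{\mirrorline}(S^1)$ is compactly generated by modules over $k[t]$ (the endomorphism ring of a generator pulled back from a single stalk), so one can either appeal to a general microlocal Künneth theorem (available via Kashiwara--Schapira together with compact generation, or via the comparison with wrapped Fukaya categories of \cite{GPS1, GPS2, GPS3}) or check directly that tensoring compact generators matches: the generator corresponding to $k[t]$ on each factor tensors to the generator corresponding to $k[t_1, \dots, t_n]$, with endomorphism ring $k[t]^{\otimes n} = k[t_1, \dots, t_n]$, identifying both sides as modules over this ring.
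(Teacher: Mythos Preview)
Your proposal is correct and follows exactly the same approach as the paper, which simply writes ``K\"unneth'' as its proof. You have spelled out in detail what the paper leaves implicit: the factorization on both sides as $n$-fold tensor products and the application of Lemma \ref{lem: mirror line} factorwise.
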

\begin{proof}
K\"unneth.
\end{proof} 

In fact we need the following functoriality.  The inclusion $i: 0 \in \A^1$ corresponds to a pullback functor 
$i^*: \Coh(\A^1) \to \Mod(k)$.  
Recall that at a smooth Lagrangian point of some $\Lambda \subset T^*M$, there is a functor of `microstalk' 
$\mu: \Sh_\Lambda(M) \to \Mod(k)$.  More precisely, there are many such functors differing by autoequivalences of $\Mod(k)$; 
one can fix a canonical choice after specifying data related to Maslov indices \cite[Chap. 7.5]{Kashiwara-Schapira}.
Fix any such choice $\mu: \Sh_{\mirrorline }(S^1) \to \Mod(k)$ for the microstalk at $T_0^- S^1$.   

\begin{lemma} \label{lem: intertwine}
The isomorphism of Lemma \ref{lem: mirror line} can be chosen to intertwine $i^*$ and $\mu$.
\end{lemma}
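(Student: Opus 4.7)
The plan is to give a concrete ``quiver'' description of $\Sh_{\mirrorline}(S^1)$, under which the equivalence of Lemma \ref{lem: mirror line} can be read off, and then compute both $i^*$ and $\mu$ on the free generator by hand. Since both functors are cocontinuous $k$-linear functors to $\Mod(k)$, it suffices to show they agree (as $k[t]$-$k$-bimodules) on any one compact generator.

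First I would make the $A$-side explicit. An object $\cF \in \Sh_{\mirrorline}(S^1)$ is automatically constructible with respect to the stratification $\{0\} \subset S^1$, so its data amounts to $V := \cF|_{S^1 \setminus \{0\}}$ (a $k$-module, since the complement is contractible), $W := \cF_0$, and two cospecialization maps $L, R : W \to V$ corresponding to the two exit paths from $0$. Applying the microsupport criterion at the covector $(0, +1)$ and using the fiber triangle $R\Gamma_{\{y \ge 0\}}(\cF) \to \cF \to \cF|_{\{y < 0\}}$ at stalks at $0$ identifies $R\Gamma_{\{y \ge 0\}}(\cF)_0 \simeq \mathrm{fib}(L : W \to V)$. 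Thus the condition $SS(\cF) \cap T_0^* S^1 \subset T_0^- S^1$ is equivalent to $L$ being invertible, and $\Sh_{\mirrorline}(S^1) \simeq \Mod(k[t])$ via $\cF \mapsto (V, t := R \circ L^{-1})$. Under this identification, the free generator $k[t] \cong \cO_{\A^1}$ corresponds to the sheaf $\cP$ with $V = W = k[t]$, $L = \mathrm{id}$, and $R$ equal to multiplication by $t$.

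Next I would compute $\mu(\cP)$ by the mirror-image argument: the triangle $R\Gamma_{\{y \le 0\}}(\cF) \to \cF \to \cF|_{\{y > 0\}}$ yields $R\Gamma_{\{y \le 0\}}(\cF)_0 \simeq \mathrm{fib}(R : W \to V)$, and the microstalk $\mu_{(0,-1)}(-)$ coincides with this fiber up to the shift the paper leaves free in its choice of $\mu$. For $\cP$ this gives $\mathrm{fib}(t : k[t] \to k[t]) \simeq k[-1]$, and fixing the Maslov-index normalization to absorb this shift gives $\mu(\cP) = k$. On the $B$-side, $i^*(\cO_{\A^1}) = k[t] \otimes_{k[t]}^L k = k$. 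Since both $i^*$ and $\mu$ are cocontinuous and linear over $k[t] = \mathrm{End}(\cO_{\A^1}) = \mathrm{End}(\cP)$, and $t$ acts as zero on $\mu(\cP) = k$ just as on $i^*(\cO_{\A^1}) = k[t]/(t)$, the two functors are naturally isomorphic.

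The main obstacle is pinning down the microstalk shift so as to absorb the degree mismatch in the final calculation; this is precisely the Maslov-index choice the paper references. Once that shift is fixed, the rest of the argument is routine bookkeeping on the representation-theoretic presentation of $\Sh_{\mirrorline}(S^1)$.
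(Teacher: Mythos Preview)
Your argument is correct. The paper does not actually prove this lemma: at the start of Section~2 it declares that the proofs of these lemmas are ``elementary and sketched or omitted,'' and for Lemma~\ref{lem: intertwine} no proof is given at all. What you have written is exactly the sort of elementary verification the paper is gesturing at. Your constructible model for $\Sh_{\mirrorline}(S^1)$ agrees with the paper's implicit one (the proof of Lemma~\ref{lem: mirror line} identifies the equivalence with the stalk functor at a nonzero point, which is your $V$), your computation of the microstalk as $\mathrm{fib}(R:W\to V)$ is the standard one, and the shift you absorb is precisely the Maslov ambiguity the paper explicitly allows in its choice of $\mu$. The final step---that two cocontinuous functors $\Mod(k[t])\to\Mod(k)$ agreeing on the generator as $k[t]$-modules must be isomorphic---is the correct way to finish, and since $\End_k(k)=k$ is discrete there are no hidden higher coherences to check.
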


For $\chi \in \G_m$, we write $\cL_\chi$ for the local system on $S^1$ with monodromy $\chi$.  We have a $\G_m$ action 
on $\A^1$, inducing an action on $\Coh(\A^1)$ by pushforward.  We have a $\G_m$ action on any category of sheaves 
with prescribed microsupport over $S^1$ where $\chi$ acts by $\otimes \cL_\chi$.  

\begin{lemma} \label{lem: equivariant}
The isomorphism of Lemma \ref{lem: mirror line} intertwines the $\G_m$ actions. 
\end{lemma}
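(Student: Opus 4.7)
The plan is to verify that both $\G_m$ actions implement the same strict automorphism of the common category $\Mod(k[t])$ identified with each side by Lemma \ref{lem: mirror line}: namely, restriction of scalars along the ring automorphism $\sigma_\chi : k[t] \to k[t]$, $t \mapsto \chi t$.

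On the coherent side, for $\chi \in \G_m$ write $f_\chi : \A^1 \to \A^1$, $x \mapsto \chi x$. The pullback of the coordinate function $t$ along $f_\chi$ is $x \mapsto \chi x$, so $f_\chi^\# : k[t] \to k[t]$ sends $t \mapsto \chi t$. Pushforward of quasicoherent sheaves along $f_\chi$ is restriction of scalars along $f_\chi^\# = \sigma_\chi$, which gives the asserted autoequivalence of $\Mod(k[t])$.

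On the sheaf side, the identification of Lemma \ref{lem: mirror line} sends $F \in \Sh_{\mirrorline}(S^1)$ to its stalk $F_p$ at a nonzero basepoint $p$, equipped with the $k[t]$-action in which $t$ corresponds to the one-sided monodromy around $S^1$ in the direction selected by the stop $T_0^- S^1$. For $\cL_\chi$, parallel transport once around is multiplication by $\chi$, so $(F \otimes \cL_\chi)_p$ has the same underlying $k$-module as $F_p$ but with monodromy multiplied by $\chi$; this is again restriction of scalars along $\sigma_\chi$.

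The only real subtlety is a one-time convention: the choice of $T_0^-$ (rather than $T_0^+$) fixes a sign/direction for the generator $t$, and one must check that this matches the convention on the coherent side so that $\chi$ corresponds to $f_\chi$ rather than $f_{\chi^{-1}}$. With this fixed in Lemma \ref{lem: mirror line}, both actions yield the same autoequivalence of $\Mod(k[t])$ strictly on the generator. Since in each case the $\G_m$-action factors through the group homomorphism $\chi \mapsto \sigma_\chi$ into the strict automorphism group of the ring $k[t]$, the higher coherences required for a $\G_m$-equivariant equivalence of stable $\infty$-categories are produced automatically, and no further delooping data need be checked by hand.
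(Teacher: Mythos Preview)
Your argument is correct and is precisely the natural verification; the paper itself omits the proof entirely (it falls under the blanket ``elementary and sketched or omitted'' at the start of that section), so you have supplied the details rather than taken a different route. Your caveat about the $\chi$ versus $\chi^{-1}$ convention is apt and mirrors the paper's own later remark that such ambiguities do not affect categories of invariants; your handling of the $\infty$-categorical coherences---observing that on both sides the action is induced from the strict algebraic $\G_m$-action on the ring $k[t]$ itself---is the right way to dispose of that point without further work.
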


\section{Mirror to deletions}  \label{sec: deletion} 

Let $Z \subset \A^n$ be some union of coordinate strata.  We wish to produce the mirror of 
$\Coh(\A^n \setminus Z)$.  
Note that, in general, 
$\Coh(X \setminus Y)$ is the quotient of $\Coh(X)$ by the image of $\Coh(Y)$ under the (not fully faithful)
morphism of pushforward under inclusion.  

The space $\mirrorline^n$ is the union of coordinate-wise nonpositive conormals to all strata in the product stratification 
arising from $S^1 = \R/\Z = 0  \cup (0,1)$, or equivalently the closure of the union of the strictly negative conormals.  
Here, by the (strictly) negative conormal to a stratum, we mean that one takes codirections
which are (strictly) negative along every component corresponding to a $0$ rather than a $(0,1)$. 

There is an evident bijection between the above strata in $(S^1)^n$ and the coordinate strata in $\A^n$.

\begin{definition}
$\Lambda_Z$ is the union of the nonpositive conormals to strata in $(S^1)^n$ whose corresponding strata in $\A^n$ 
are not contained in $Z$. 
\end{definition}

\begin{example}
Consider the case of the diagonal $\G_m$ action on $\A^n$.  Then $Z = \{0\}$, and $Y = \P^{n-1}$.  
The Lagrangian $\Lambda_Z$ is the closure of the union of negative conormals to coordinate strata of $(S^1)^n$ 
except for the point stratum $0^n$.  In this case, $\Lambda_Z$ is obtained from $\mirrorline^n$ by deleting the 
strictly negative conormals to the stratum $0^n \in (S^1)^n$. 
\end{example}

\begin{proposition} \label{prop: remove}
There is an isomorphism $\Coh(\A^n \setminus Z) \cong \Sh_{\Lambda_Z}((S^1)^n)$. 
\end{proposition}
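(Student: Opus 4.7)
The plan is to match the Verdier localization sequences defining the two sides, via the K\"unneth equivalence $\Coh(\A^n) \cong \Sh_{\mirrorline^n}((S^1)^n)$ of the preceding corollary.

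On the $A$-side, $\Coh(\A^n \setminus Z) = \Coh(\A^n) / \Coh_Z(\A^n)$, with $\Coh_Z(\A^n)$ the (ind-)subcategory of sheaves set-theoretically supported on $Z$. Since $Z$ is a union of closed coordinate strata $\bar{S}_I$ (for those $I$ with $\bar{S}_I \subset Z$), this subcategory is generated as a stable ind-subcategory by the pushforwards $\bar{\iota}_{I*}: \Coh(\bar{S}_I) \to \Coh(\A^n)$; via K\"unneth, each $\bar{\iota}_{I*}$ factors as a tensor product of identities on $\Coh(\A^1)$ (in coordinates $j \notin I$) and the closed-embedding pushforward $i_*: \Mod(k) \to \Coh(\A^1)$ (in coordinates $i \in I$), where $i: \{0\} \hookrightarrow \A^1$. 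On the $B$-side, since $\Lambda_Z \subset \mirrorline^n$ is closed and conic, the inclusion $\Sh_{\Lambda_Z}((S^1)^n) \hookrightarrow \Sh_{\mirrorline^n}((S^1)^n)$ is reflective (by standard microlocal theory) with left adjoint $L$, so $\Sh_{\Lambda_Z}((S^1)^n) = \Sh_{\mirrorline^n}((S^1)^n) / \ker(L)$ and $\ker(L) = {}^\perp \Sh_{\Lambda_Z}((S^1)^n)$. It therefore suffices to match $\Coh_Z(\A^n)$ with $\ker(L)$.

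By Lemma \ref{lem: intertwine} and adjointness, the mirror of $i_*$ is the right adjoint $\mu^R$ of microstalk $\mu$. Transporting the $A$-side identification $\mathrm{image}(i_*) = \ker(j^*: \Coh(\A^1) \to \Coh(\G_m))$ across Lemma \ref{lem: mirror line}, the essential image of $\mu^R$ generates the left orthogonal of $\Sh_{T_{S^1}^* S^1}(S^1)$ in $\Sh_{\mirrorline}(S^1)$. Assembling via K\"unneth across the coordinates in $I$ and unioning over all $I$ with $\bar{S}_I \subset Z$, the mirror of $\Coh_Z(\A^n)$ becomes the subcategory generated by tensor products of these single-coordinate orthogonals, which I claim equals $\ker(L) = {}^\perp \Sh_{\Lambda_Z}((S^1)^n)$; matching the two Verdier quotient sequences then yields the desired equivalence.

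The main obstacle is this final K\"unneth-and-union assembly --- verifying that the tensor products of the single-coordinate orthogonals to $\Sh_{T_{S^1}^* S^1}(S^1)$ (and identity in the remaining coordinates), assembled across $I$'s with $\bar{S}_I \subset Z$, exactly exhaust ${}^\perp \Sh_{\Lambda_Z}((S^1)^n)$. This is a stratification-compatibility check reflecting the explicit decomposition $\mirrorline^n \setminus \Lambda_Z = \bigsqcup_{\bar{S}_I \subset Z} (\text{strictly negative conormal to } S_I)$, where each strictly negative conormal is itself a K\"unneth product of single-coordinate strictly negative conormal pieces, matching the K\"unneth structure of the generators on the other side.
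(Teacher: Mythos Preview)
Your plan follows the same strategy as the paper's proof: match the Verdier quotient
\[
\Coh(\A^n \setminus Z) = \Coh(\A^n)/\Coh_Z(\A^n)
\]
across the K\"unneth equivalence, using Lemma~\ref{lem: intertwine} to identify the mirror of each $i_*$ with the adjoint of a microstalk functor. Two points are worth flagging.

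\textbf{Left versus right orthogonal.} Since $i_*$ is the \emph{right} adjoint of $i^*$, its mirror is $\mu^R$, and for $F\in\ker\mu$ one has $\Hom(F,\mu^R(-))=\Hom(\mu F,-)=0$; thus $\operatorname{im}(\mu^R)$ lies in the \emph{right} orthogonal $(\ker\mu)^\perp$, not the left. Correspondingly, $\Sh_{\Lambda_Z}=\bigcap_i\ker\mu_i={}^\perp\bigl(\langle\operatorname{im}\mu_i^R\rangle\bigr)$, so the mirror of $\Coh_Z$ sits inside $(\Sh_{\Lambda_Z})^\perp$, not ${}^\perp\Sh_{\Lambda_Z}$. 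You should therefore use the \emph{right} adjoint (coreflection) to the inclusion $\Sh_{\Lambda_Z}\hookrightarrow\Sh_{\mirrorline^n}$, whose kernel is $(\Sh_{\Lambda_Z})^\perp$, rather than the left adjoint. This is a consistent swap throughout and does not affect the conclusion, but as written the intermediate identifications are on the wrong side.

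\textbf{The ``main obstacle.''} What you call the main obstacle is exactly the substantive step, and the paper handles it more directly than by chasing orthogonals. The key inputs are: (i) killing the corepresentatives of the microstalks along a smooth open Lagrangian component is the same as deleting that component from the allowed microsupport; and (ii) after deleting from $\mirrorline^n$ all strictly negative conormals to strata corresponding to $Z$, what remains differs from $\Lambda_Z$ only by subcritical isotropic pieces (the non-closed boundaries of the deleted cells). These cannot occur as microsupport by the involutivity theorem \cite[Thm.~6.5.4]{Kashiwara-Schapira}, so $\Sh$ with the two microsupport conditions agree. Your K\"unneth-and-union description is equivalent to this, but invoking (i) and (ii) is how one actually closes the gap.
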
 
\begin{proof}
We must show that $\Sh_{\Lambda_Z}((S^1)^n)$ is the quotient of $\Sh_{\Lambda^n}((S^1)^n)$ by the images 
of categories mirror to the pushforwards of $\Coh(L)$ for $L \subset Z$ a coordinate subspace.  By Lemma 
\ref{lem: intertwine}, the proof of Lemma \ref{lem: mirror line}, K\"unneth, and the above discussion, 
this means we should take the quotient by the microstalks along the strictly negative conormals to strata of $(S^1)^n$
which correspond to strata of $Z$.  Taking the quotient by microstalks is the same as deleting the component from
the microsupport.  The result of deleting these components from $\mirrorline^n$ differs from $\Lambda_Z$ only by 
subcritical isotropics, which do not affect the microlocal theory of sheaves \cite[Thm. 6.5.4]{Kashiwara-Schapira}. 
\end{proof} 

\begin{remark}
In the above argument, we took the quotient of $\Coh(\A^n)$ by all coherent sheaf categories from strata in $Z$.  
This is the same as taking the quotient by the coherent sheaf categories from top dimensional strata; but on the mirror
side it is not completely obvious that deleting only conormals to the corresponding largest dimensional strata results
in a category equivalent to that determined by $\Lambda_Z$.  To see this is true directly, one must remember e.g. that 
the microsupport of a sheaf can never be a manifold with boundary, or a manifold which is locally closed but not closed. 
\end{remark}

\section{Mirror to quotients} \label{sec: quotient}

There is an evident equivalence between sheaves on $S^1$ and periodic sheaves on $\R$:  
\begin{equation} \Sh(S^1) = \Sh(\R)^\Z \end{equation}
On the other hand, $\Sh(S^1)$ carries a $\G_m$ action, where $\lambda \in \G_m$ acts as tensor
product with the rank one local system of monodromy $\lambda$.  Passing 
to the universal cover $\R \to S^1$ as trivializes this action, leading one to expect 
\begin{equation} \label{eq: antidescent} 
\Sh(S^1)^{\G_m} \overset{?}{=} \Sh(\R)
\end{equation}

More generally, one has the following categorical incarnation of Pontrjagin duality:   

\begin{lemma} \label{lem: 1-affine}
Let $G \subset \G_m^n$ be any subgroup and let $L = \Hom(G, \G_m)$.  
Then there are inverse equivalences between: the category of presentable
dg categories with a $G$ action, and the category of presentable dg categories with an $L$ action, which take 
$\mathcal{C} \mapsto \mathcal{C}^G$ and $\mathcal{D} \mapsto \mathcal{D}^L$. 
\end{lemma}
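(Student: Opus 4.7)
The plan is to derive the lemma from Gaitsgory's 1-affineness theorem for $BG$ together with a Morita-style identification of the relevant module categories.

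First I would reduce to the atomic cases $G = \G_m$ (with $L = \Z$) and $G = \mu_m$ (with $L = \Z/m$). Since $G \subseteq \G_m^n$ is diagonalizable, it decomposes as a product of such factors; the invariants functors on both sides of the claimed equivalence are compatible with products (e.g.\ $\mathcal{C}^{G_1 \times G_2} = (\mathcal{C}^{G_1})^{G_2}$, with the $G_2$-action on $\mathcal{C}^{G_1}$ inherited from the original), so the general case follows from the atomic ones by iteration.

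For each atomic case, 1-affineness of $BG$ (Gaitsgory--Rozenblyum) identifies presentable dg categories with $G$-action with $\mathrm{QCoh}(BG)$-module categories in $\mathrm{Pr}^L$. The key Morita/Koszul duality calculation is that the augmentation module $\Mod(k)$ generates $\mathrm{QCoh}(BG)\text{-mod}$, with endomorphism algebra computed by a fiber product:
\begin{equation*}
\mathrm{End}_{\mathrm{QCoh}(BG)\text{-mod}}(\Mod(k)) = \Mod(k) \otimes_{\mathrm{QCoh}(BG)} \Mod(k) = \mathrm{QCoh}(\mathrm{pt} \times_{BG} \mathrm{pt}) = \mathrm{QCoh}(G) = \Mod(k[L]).
\end{equation*}
Barr--Beck applied to the resulting adjunction between $\mathrm{QCoh}(BG)\text{-mod}$ and $\Mod(k[L])\text{-mod}$ then yields the equivalence, implemented by $\mathcal{C} \mapsto \Hom_{\mathrm{QCoh}(BG)\text{-mod}}(\Mod(k), \mathcal{C}) = \mathcal{C}^G$; the target is naturally a $\Mod(k[L])$-module (i.e., an $L$-category) via the residual action of $\mathrm{End}(\Mod(k))$. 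Running the same argument with the roles of $G$ and $L^{\mathrm{disc}}$ reversed --- 1-affineness of $BL^{\mathrm{disc}}$ for discrete $L$ is essentially tautological --- produces the inverse equivalence $\mathcal{D} \mapsto \mathcal{D}^L$.

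The main obstacle is 1-affineness of $BG$ over a general commutative base $k$. For $G = \G_m$ this is immediate from the weight-space decomposition of $\mathrm{Rep}(\G_m)$, which is valid over any $k$; for $G = \mu_m$ in residue characteristics dividing $m$, one must verify compactness of the augmentation module and carry out the Barr--Beck check directly, but the structure of the argument is unchanged.
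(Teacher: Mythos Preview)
Your approach rests on the same core input as the paper's --- Gaitsgory's 1-affineness of $BG$ --- but the way you assemble the pieces is tangled, and one genuine step is missing.

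The paper's route is cleaner: it observes directly that an $L$-action on $\mathcal{D}$ is the same as a module structure over the monoidal category of $L$-graded $k$-modules, which is $(\mathrm{QCoh}(BG),\otimes)$; thus $L$-categories are already $(\mathrm{QCoh}(BG),\otimes)$-modules with no Morita step needed. Then 1-affineness is exactly the statement that equivariantization $\mathcal{C}\mapsto\mathcal{C}^G$ is an equivalence from $(\mathrm{QCoh}(G),\star)$-modules (i.e.\ $G$-categories) to $(\mathrm{QCoh}(BG),\otimes)$-modules. Your Morita/Barr--Beck step, by contrast, is essentially the 1-affineness equivalence run \emph{backwards}: you first invoke 1-affineness to pass to $\mathrm{QCoh}(BG)$-modules, and then $\Hom_{\mathrm{QCoh}(BG)}(\Mod(k),-)$ is de-equivariantization, so the composite you describe lands back on $\mathcal{C}$, not on $\mathcal{C}^G$. (Relatedly, your identification ``$\Mod(k[L])$-mod $=$ $L$-cat'' needs a monoidal structure specified; with the convolution product coming from the bialgebra $k[L]=\mathcal{O}(G)$ this is $G$-cat again, not $L$-cat. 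Also, writing $\mathrm{End}(\Mod(k))=\Mod(k)\otimes_{\mathrm{QCoh}(BG)}\Mod(k)$ already presupposes self-duality of $\Mod(k)$, which is the rigidity input the paper isolates.)

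The substantive omission is the check that $\mathcal{D}\mapsto\mathcal{D}^L$ is actually the \emph{inverse} of $\mathcal{C}\mapsto\mathcal{C}^G$. Running your argument with the roles reversed produces \emph{an} equivalence in the other direction, but the inverse supplied by 1-affineness/Barr--Beck is de-equivariantization $\mathcal{D}\mapsto\mathcal{D}_{BG}=\mathcal{D}_L$ (coinvariants), not $\mathcal{D}^L$. The paper closes this gap explicitly: either by noting that for presentable categories the (co)equalizer computing $\Z$-invariants is carried to the one computing $\Z$-coinvariants by passing to adjoints, or by invoking rigidity of $\mathrm{QCoh}(BG)$ so that $\Mod(k)$ is self-dual and $\Hom$ agrees with $\otimes$. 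You should add this step; without it the claimed pair of functors is not shown to be an inverse pair. Your reduction to the atomic cases $\G_m$ and $\mu_m$ is harmless but unnecessary --- the paper treats arbitrary $G\subset\G_m^n$ at once, citing that 1-affineness passes to closed subgroups.
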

The following argument was explained to me by Justin Hilburn and Nick Rozenblyum. 
\begin{proof}
An action of $L$, i.e. a map $L \to \mathrm{End}(\mathcal{D})$, is the same as an action
of the monoidal category of $L$-graded $k$-modules.  This in turn is equivalent to the monoidal category of $G$-representations,
i.e. the category  $(\mathrm{QCoh}(BG), \otimes)$.  Now we will use the 
fact that $B G$ is ``1-affine'' \cite{Gaitsgory}.\footnote{In \cite{Gaitsgory} it is assumed that the ground ring $k$ is a 
field of characteristic zero. However the role of this hypothesis is only to ensure semisimplicity of the category of $G$ representations, so we do
not need for $\G_m^n$.  Subgroups of $\G_m^n$ are then also 1-affine by \cite[Cor 3.2.7]{Gaitsgory} }  

In general a stack $X$ is said to be ``1-affine'' when the `global sections' map 
$$\Gamma: \mbox{sheaves of categories on $X$} \to 
\mbox{categories with an action of $\mathcal{O}_X$}$$
is an equivalence.  It is moreover the case \cite[Section 10.3.2]{Gaitsgory} that `sheaves
of categories on $BG$' are naturally identified with `categories with an action of $(\mathrm{QCoh}(G), \star)$', where $\star$ is the 
convolution product coming from the multiplication on $G$.  1-affineness translates to the assertion 
that adjunction between `equivariantization' and `de-equivariantization' is an equivalence: 
\begin{eqnarray*}
(\mathrm{QCoh}(G), \star)-\mathrm{modules} & \leftrightarrow & (\mathrm{QCoh}(BG), \otimes)-\mathrm{modules} \\
\mathcal{C} & \mapsto & \mathcal{C}^{G} = \Hom_{\mathrm{QCoh}(G)}(\mathrm{Mod}(k), \mathcal{C})  \\
  \mathrm{Mod}(k) \otimes_{\mathrm{QCoh}(BG)} \mathcal{D} = \mathcal{D}_{BG} & \mapsfrom & \mathcal{D}   
\end{eqnarray*}
Above, the actions on $\mathrm{Mod}(k)$ are trivial.

It remains to check $\mathcal{D}_{L} \cong \mathcal{D}^L$. 
We explain for $L = \Z$, the general case is similar.  A $\Z$-action is encoded by the automorphism $F$ by which $1 \in \Z$ acts; 
the co/invariants are given by the co/equalizer of the diagram $\mathcal{D} \rightrightarrows \mathcal{D}$ comparing
the maps $id_\mathcal{D}$ and $F$.  Our categories are presentable; taking adjoints  
interchanges $F$ with $F^{-1}$ and invariants with co-invariants.  A second argument: $\mathrm{QCoh}(B\G_m)$ is rigid, 
so $\mathrm{Mod}(k)$ is self-dual as a $\mathrm{QCoh}(B\G_m)$-module 
\cite[Vol. 1, Chap. 1, Prop. 9.4.4]{Gaitsgory-Rozenblyum},  hence invariants and co-invariants agree. 
\end{proof}

We leave the reader to check that when $\mathcal{D} = \Sh(\R)$ and the $L = \Z$ action is pullback by translation, 
then the corresponding $\G_m$ action is the one described above, hence establishing the validity of Equation \ref{eq: antidescent}.  
Analogous statements then follow for $\mathcal{D} = \Sh(\R^n/M)$ 
and $L = \Z^n/M$ for any subgroup $M \subset \Z^n$, and moreover for full subcategories defined by a microsupport condition
preserved by the $L$ action. 

We return to our setting: $G \subset \G_m^n$ acts on affine space $\A^n$.  We have the sequence of groups
\begin{equation} \label{eq: group sequence} 1 \to G \to \G_m^n \to \T \to 1 \end{equation}
and the corresponding sequence of characters 
\begin{equation} \label{eq: character} 0 \to M  \to \Z^n \to \Hom(G, \G_m) \to 0\end{equation} 

In this sequence we identify $\Z^n = \pi_1((\R/\Z)^n)$.  We write $c_M: \R^n/M  \to \R^n / \Z^n = (S^1)^n$ for the cover corresponding
to the subgroup $M \subset \Z^n$.  We write $\Lambda_{Z, M}$ for the preimage of $\Lambda_Z$ under the induced cover
of cotangent bundles.   

\begin{proposition} \label{prop: mirror}
$\Sh_{\Lambda_{Z, M}}(\R^n / M) \cong \Coh(\A^n \setminus Z)^G$.
\end{proposition}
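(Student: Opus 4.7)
The plan is to combine Proposition~\ref{prop: remove} with Lemma~\ref{lem: 1-affine}, after first promoting Proposition~\ref{prop: remove} to a $G$-equivariant equivalence and then identifying $G$-invariants on the $B$-side with sheaves on the cover $\R^n/M$.

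First I would verify the $G$-equivariance of Proposition~\ref{prop: remove}. By Lemma~\ref{lem: equivariant} and K\"unneth, the mirror equivalence $\Coh(\A^n) \cong \Sh_{\mirrorline^n}((S^1)^n)$ is $\G_m^n$-equivariant, with the action on the $A$-side being pushforward and on the $B$-side being tensoring with rank one local systems. The proof of Proposition~\ref{prop: remove} proceeds by quotienting by categories coming from pushforwards along coordinate subspaces $L \subset Z$ (equivalently their mirror microstalk subcategories); since coordinate subspaces are $\G_m^n$-stable and microstalks are intertwined by Lemma~\ref{lem: intertwine}, the quotient equivalence inherits a $\G_m^n$-equivariant structure. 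Restricting along $G \subset \G_m^n$ and taking $G$-invariants, the claim reduces to
$$\Sh_{\Lambda_Z}((S^1)^n)^G \cong \Sh_{\Lambda_{Z, M}}(\R^n/M).$$

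Next I would invoke Lemma~\ref{lem: 1-affine}. The cover $c_M \colon \R^n/M \to (S^1)^n$ has deck group $\Z^n/M$, which by the character sequence~\eqref{eq: character} is precisely $L = \Hom(G, \G_m)$, the Pontryagin dual of $G$. The category $\Sh(\R^n/M)$ carries an $L$-action by pullback along deck translations; because $\Lambda_{Z,M}$ is by definition the preimage of $\Lambda_Z$, it is $L$-invariant, so the action restricts to $\Sh_{\Lambda_{Z,M}}(\R^n/M)$. The $n$-dimensional, microsupport-constrained version of equation~\eqref{eq: antidescent} (the paragraph following Lemma~\ref{lem: 1-affine}) gives $\Sh_{\Lambda_{Z,M}}(\R^n/M)^L = \Sh_{\Lambda_Z}((S^1)^n)$, with induced $G$-action on the right side given coordinate-wise by tensoring with monodromy local systems for characters in $G$. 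Applying Lemma~\ref{lem: 1-affine} in the reverse direction then yields
$$\Sh_{\Lambda_Z}((S^1)^n)^G = \bigl(\Sh_{\Lambda_{Z,M}}(\R^n/M)^L\bigr)^G \cong \Sh_{\Lambda_{Z,M}}(\R^n/M),$$
which, chained with the first step, completes the proof.

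The step I expect to require the most care is matching the two $G$-actions on $\Sh_{\Lambda_Z}((S^1)^n)$: the one obtained by restricting the $\G_m^n$-action used in Lemma~\ref{lem: equivariant}, and the one produced by the 1-affineness/descent mechanism from translations on the cover $\R^n/M$. This is essentially the ``reader's check'' following Lemma~\ref{lem: 1-affine}, upgraded to $n$ coordinates and to the microsupport setting; once it is in hand, everything else is a formal consequence of 1-affineness together with the $L$-invariance of $\Lambda_{Z,M}$.
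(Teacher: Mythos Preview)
Your proposal is correct and follows essentially the same route as the paper: use descent to identify $\Sh_{\Lambda_{Z,M}}(\R^n/M)^{\Z^n/M}$ with $\Sh_{\Lambda_Z}((S^1)^n) \cong \Coh(\A^n\setminus Z)$ via Proposition~\ref{prop: remove}, then take $G$-invariants and invoke Lemma~\ref{lem: 1-affine} to undo the $L$-invariants on the left, with Lemma~\ref{lem: equivariant} ensuring the $G$-actions match. The paper's proof is more terse but the logical content is identical, including the acknowledgement that matching the two $G$-actions is the point requiring care.
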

\begin{proof}
By descent and Proposition \ref{prop: remove},
$$\Sh_{\Lambda_{Z, M}}(\R^n / M)^{\Z^n/M} = \Sh_{\Lambda_Z}(\R^n/\Z^n) = \Coh(\A^n \setminus Z)$$  
Now we take $G$ invariants  apply Lemma \ref{lem: 1-affine} on the left hand side.  The fact that the $G$ actions
in question are the expected ones follows from Lemma \ref{lem: equivariant}.  
\end{proof}

\begin{remark}
Various choices (e.g. the orientation of $S^1$) may lead to the $G$ or $L$ actions being 
conjugated by an automorphism of $G$ or $L$.  This does not affect the categories of invariants. 
\end{remark}

\begin{remark}
The interaction of mirror symmetry with equivariance and covers has been known since at least early 2000's, see e.g. \cite{Seidel}; 
in the present context it is at least implicit in the relationship between \cite{FLTZ2} and \cite{Treumann}.   The work \cite{Vaintrob}
argues for a version of Lemma \ref{lem: 1-affine} without appeal to \cite{Gaitsgory}, and uses it to 
deduce the `nonequivariant' equivalence conjectured in \cite{Treumann} from the `equivariant' results of \cite{FLTZ2}. 
The recent work \cite{Zhou-flop, Huang-Zhou} also takes a point of view similar to the present article. 
\end{remark}

\newpage

\section{The Cox construction and the FLTZ skeleton}


Proposition \ref{prop: mirror} gives the mirror construction to Equation (\ref{coh from quotient}).  However, 
it does not provide an entirely satisfying mirror for the the toric stack $Y = (\A^n \setminus Z)/G$.  In particular, this
stack is $n - g$ dimensional, where $g = \dim G$.   One would expect a $2 (n - g)$ dimensional mirror;
whereas $T^* ( \R^n / M)$ is $2n$ dimensional. 

We will write $A = \R^n / M$ and $M_\R = M\otimes \R$.  Consider the sequence 
$$ 0 \to M_\R / M  \to A = \R^n / M  \xrightarrow{\pi} \R^n / M_\R \to 0$$
So $A$ is a $(n-g)$-torus bundle over the $g$-dimensional vector space $ \R^n / M_\R $.  
For $\gamma \in \R^n / M_\R$, we will write $A_\gamma$ for the corresponding $(n-g)$-torus fiber.  

Consider the induced map $p: T^*A \to  \R^n / M_\R$.  We denote
the symplectic reduction maps as $r: p^{-1}(\gamma) \to T^* A_\gamma$.  For 
$\Lambda \subset T^*A$, we write 
$$\Lambda_\gamma := r(\Lambda \cap p^{-1}(\gamma)) \subset T^* A_\gamma$$ 

We  say  $\Lambda \subset T^* A$ is $\pi$-noncharacteristic if $\Lambda$ contains no nonzero covectors
pulled back from the base; i.e. no nonzero covectors to the fibers.  By 
\cite[Prop. 5.4.13.i]{Kashiwara-Schapira}, we have: 

\begin{lemma} 
For $\pi$-noncharacteristic $\Lambda$, restriction at $\gamma$ gives a map 
$\Sh_\Lambda(A) \to \Sh_{\Lambda_\gamma}(A_\gamma)$. 
\end{lemma}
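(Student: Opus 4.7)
The plan is to recognise the restriction at $\gamma$ as the inverse image functor $i_\gamma^{\ast}$ along the closed embedding $i_\gamma\colon A_\gamma\hookrightarrow A$, and then invoke \cite[Prop.~5.4.13.i]{Kashiwara-Schapira} once we match the hypotheses. The dictionary to set up is that the "symplectic reduction" $r$ of the statement is literally the Kashiwara--Schapira codifferential associated to $i_\gamma$, and the $\pi$-noncharacteristic hypothesis on $\Lambda$ specialises to the Kashiwara--Schapira noncharacteristic hypothesis for $i_\gamma$.

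First I would unpack the noncharacteristic condition pointwise. For $a\in A_\gamma$, the conormal fibre $T^{\ast}_{A_\gamma}A|_a$ consists of those covectors in $T^{\ast}_a A$ that annihilate the tangent space to $A_\gamma$, which is the same as the image of $\pi^{\ast}\colon T^{\ast}_\gamma(\R^n/M_\R)\to T^{\ast}_a A$. So the hypothesis that $\Lambda$ contain no nonzero covectors pulled back from the base is exactly the hypothesis
\[
\Lambda \cap T^{\ast}_{A_\gamma}A \subset 0_A
\]
for every $\gamma$, which is Kashiwara--Schapira's definition of $i_\gamma$ being noncharacteristic with respect to $\Lambda$.

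Next I would match the microsupport estimate. The Kashiwara--Schapira inverse-image diagram for $i_\gamma$ is
\[
T^{\ast}A_\gamma \xleftarrow{\,i_{\gamma,d}\,} A_\gamma\times_A T^{\ast}A \xrightarrow{\,i_{\gamma,\pi}\,} T^{\ast}A,
\]
where $i_{\gamma,\pi}$ is the inclusion of $p^{-1}(\gamma)=T^{\ast}A|_{A_\gamma}$ into $T^{\ast}A$ and $i_{\gamma,d}$ is the surjection $T^{\ast}A|_{A_\gamma}\twoheadrightarrow T^{\ast}A_\gamma$ given by restricting covectors. This surjection is exactly the symplectic reduction map $r$, so by definition
\[
\Lambda_\gamma \;=\; r(\Lambda\cap p^{-1}(\gamma)) \;=\; i_{\gamma,d}\bigl(i_{\gamma,\pi}^{-1}\Lambda\bigr).
\]
Then \cite[Prop.~5.4.13.i]{Kashiwara-Schapira}, applied to any $F\in\Sh(A)$ with $SS(F)\subset\Lambda$, gives both that $i_\gamma^{\ast}F$ agrees with $i_\gamma^{!}F$ up to shift (so restriction is well defined on the unbounded derived category without needing extra boundedness) and that $SS(i_\gamma^{\ast}F)\subset \Lambda_\gamma$. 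This produces the desired functor $\Sh_\Lambda(A)\to\Sh_{\Lambda_\gamma}(A_\gamma)$.

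The only point that is not purely bookkeeping is checking that the noncharacteristic hypothesis is stable enough to also give a well defined functor on the full subcategories (not merely on individual objects), but this is automatic once one observes that the microsupport estimate is preserved by the triangulated and colimit structure of $\Sh$. So I expect no serious obstacle; the content of the lemma is entirely the translation between "covectors pulled back from the base" and the Kashiwara--Schapira conormal condition, together with the identification of $r$ with $i_{\gamma,d}$.
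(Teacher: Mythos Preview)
Your proposal is correct and follows exactly the paper's approach: the paper simply cites \cite[Prop.~5.4.13.i]{Kashiwara-Schapira} for this lemma, and what you have written is a careful unpacking of precisely that citation, matching the $\pi$-noncharacteristic hypothesis with the conormal condition and identifying $r$ with the map $i_{\gamma,d}$.
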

\begin{remark}
In fact the $\pi$-noncharacteristic hypothesis is unnecessary.  In general 
one should take {\em the closure} of the projection of $\Lambda$ to the relative
cotangent bundle $T^* \pi$, and then restrict to the fiber $T^* A_\gamma$ (see e.g. \cite[Lem. 3.13]{Nadler-Shende}).  
Because of the linear nature of the locus $\Lambda_{Z, M}$, the projection is already closed.  
We will not use this fact here, but it is relevant for studying non-simplicial fans. 
\end{remark}

The torus $A_\gamma$ has dimension $n-g$ as desired.  It remains to describe $\Lambda_{Z, M, \gamma}$ explicitly, 
study when $\Lambda_{Z, M}$ is  $\pi$-noncharacteristic, and finally determine whether the above map is an isomorphism. 

\vspace{2mm}

We now restrict ourselves to the setting where $Z$ is zero locus of the Cox irrelevant ideal of a toric variety or stack.
Let us recall from  \cite{Cox} the combinatorial data relevant to this situation: 

\begin{definition}
By fan data, we mean a lattice $N$, a rationally surjective lattice map $f:  \Z^n \to N$
and a set $\widetilde{\Sigma}$ of closed coordinate strata of $\R^n_{\ge 0}$, such that
\begin{itemize}
\item $\widetilde{\Sigma}$ is closed under taking coordinate sub-strata 
\item $\widetilde{\Sigma}$ includes all coordinate rays
\item The images under $f$ of the (relative) interiors of coordinate strata in $\widetilde \Sigma$ are disjoint
\end{itemize}
\end{definition}

The image  $\sigma = f(\tilde{\sigma})$  of a stratum 
is a cone in $N \otimes \R$.  It is more typical to introduce directly these cones, the set of which is denoted $\Sigma$ and 
called the fan.   The fan is said to be simplicial if these cones are simplices; equivalently, if the 
maps $\tilde{\sigma} \to f(\tilde{\sigma})$ are bijections.  Our definition of toric data ensures that $\widetilde \Sigma$ includes
the zero stratum, so no coordinate ray is collapsed to a point.

\begin{remark} 
Taking $f$ rationally surjective ensures the toric variety is a quotient of $\A^n$.  More generally
it would be a quotient of some $\A^n \times \G_m^k$, giving some extra circle factors on the mirror.  
\end{remark}

We write $M = \Hom(N, \Z)$ and thus obtain an injective map 
$M \xrightarrow{f} \Z^n$, determining the sequence (\ref{eq: character}) above, which in turn determines
$G = \Hom(\Z^n/M, \G_m) \subset \Hom(\Z^n, \G_m) = \G_m^n$.  

Fan data determines a locus $Z \subset \A^n$, as follows.  For each $\tilde{\sigma} \in \widetilde \Sigma$, the face 
$\tilde{\sigma} \subset \R^n_{\ge 0}$ is characterized as the zero locus of some variables $x_{i_1}, \ldots, x_{i_n}$.  
Now for each such face {\em not in} $\widetilde \Sigma$, we include in $Z$ the zero locus of the {\em complementary} variables.  

\begin{example} 
Let us consider $\P^2$.  That is, consider $N = \Z^3 / (1,1,1) \Z$.  We take $\widetilde{\Sigma}$ to include all faces except the maximal face, 
$\R^3_{\ge 0}$. 
The open face is that on which no variables vanish; correspondingly $Z \subset \A^3$ consists only of the origin, the locus where all variables vanish. 
\end{example} 

\begin{example} 
Let us consider $\P^2$ minus a vertex.  Again $N = \Z^3 / (1,1,1) \Z$, but now we take $\widetilde{\Sigma}$ to include all faces except the maximal face, 
$\R^3_{\ge 0}$, and the face given by $x_1 = 0$.  Then we should include in $Z \subset \A^3$ the origin as before, and also the locus on which $x_2$ and $x_3$ vanish, 
namely the $x_1$-axis.  That is, $Z$ is the $x_1$-axis. 
\end{example} 

From \cite[Thm. 2.1]{Cox}, we learn that with these definitions, the stack $(\A^n \setminus Z)/G$ has coarse moduli space given by the toric variety
which would ordinarily be associated to the fan $\Sigma$, and that the stack is Deligne-Mumford iff the fan is simplicial. 

\vspace{2mm}
Recall we obtain $\Lambda_Z$ by beginning with $\mirrorline^n$ and deleting strata associated to coordinate subspaces in $Z$.  The space 
$\mirrorline^n$ itself is a subset of $T^*(\R^n / \Z^n)$; it is natural to identify the cotangent fiber with the $\R^n$ which contains $\widetilde{\Sigma}$.  
Indeed, this $\R^n$ projects to the cotangent fiber of $(M \otimes \R) / M$, which is naturally identified with $N \otimes \R$.

For $\tilde{\sigma} \subset \R^n$, and any lattice $L \subset \Z^n \subset \R^n$, 
we write 
$$\tilde{\sigma}^\perp := \{p\, |\, \langle p, \tilde{\sigma} \rangle \in \Z \rangle\}  \subset \R^n / L$$
It is a union of translates
of images of linear subspaces of $\R^n$. 

Combining the definitions of $Z$ and in terms of $\widetilde{\Sigma}$ and of $\Lambda_Z$ in terms of $Z$, we have:  

$$\Lambda_Z = \bigcup_{\tilde{\sigma} \in \widetilde{\Sigma}} \tilde \sigma^\perp \times - \tilde \sigma \subset (\R/\Z)^n \times \R^n = T^* (\R/\Z)^n$$

Our conventions for $\tilde{\sigma}^\perp$ ensure that $\Lambda_{Z, M}$ is given by the same formula, now as a subset of $T^*A $.  
Finally, it follows from the definitions that: 

$$\Lambda_{Z, M, \gamma} = \bigcup_{\tilde{\sigma} \in \widetilde{\Sigma}} (\tilde \sigma^\perp \cap A_\gamma) \times - f(\tilde{\sigma})$$

In particular, $\Lambda_{Z, M, 0}$ is precisely the conic Lagrangian proposed by \cite{FLTZ3}.  

\begin{proposition} \label{prop: simplicial is noncharacteristic} 
The following are equivalent
\begin{itemize}
\item $\Sigma$ is simplicial
\item $\Lambda_{Z, M}$ is $\pi$-noncharacteristic
\item Every $\pi: \tilde{\sigma}^\perp \to \R^n / M_\R $ is a smooth submersion
\end{itemize}
\end{proposition}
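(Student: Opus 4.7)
The plan is to establish the equivalence of ``simplicial'' and ``$\pi$-noncharacteristic'' by direct translation of definitions, and the equivalence with the submersion condition by a sign-splitting argument that invokes the disjointness clause in the definition of fan data.

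For the first equivalence, I will identify the subspace of $T^*_p A$ consisting of covectors pulled back from the base with $M_\R^\perp \subset \R^n$, using the standard inner product implicit in the paper's identification $T^*A = A \times \R^n$. Since $f : \R^n \to N_\R$ is dual to the inclusion $f^* : M_\R \hookrightarrow \R^n$, this subspace coincides with $\ker f$. The cotangent fiber of $\Lambda_{Z,M}$ over a point of $\tilde{\sigma}^\perp$ is $-\tilde{\sigma}$, so $\pi$-noncharacteristic becomes $\tilde{\sigma} \cap \ker f = \{0\}$ for every $\tilde{\sigma} \in \widetilde{\Sigma}$. This is precisely the condition that $f|_{\tilde{\sigma}}$ is injective as a map of cones, equivalently that $\sigma = f(\tilde{\sigma})$ is simplicial.

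For the submersion condition, since $\tilde{\sigma}^\perp$ is a disjoint union of translates of the coordinate subspace $W_{\tilde{\sigma}} = \mathrm{span}(\tilde{\sigma})^\perp$ and $\pi$ is linear, the differential of $\pi|_{\tilde{\sigma}^\perp}$ is everywhere the restriction $W_{\tilde{\sigma}} \to \R^n / M_\R$. Surjectivity of this restriction is equivalent to $W_{\tilde{\sigma}} + M_\R = \R^n$, which by dualizing is $\mathrm{span}(\tilde{\sigma}) \cap \ker f = \{0\}$. The implication ``submersion $\Rightarrow$ simplicial'' is immediate from $\tilde{\sigma} \subset \mathrm{span}(\tilde{\sigma})$. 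For the converse, given $0 \neq v \in \mathrm{span}(\tilde{\sigma}) \cap \ker f$, I would decompose $v = v^+ - v^-$ into its coordinate-wise positive and negative parts with respect to the basis of rays of $\tilde{\sigma}$, supported on disjoint index sets $I^\pm$. If one of $I^\pm$ is empty then $\pm v$ lies in $\tilde{\sigma} \cap \ker f$, directly contradicting simpliciality. Otherwise the sub-faces $\tilde{\sigma}_{I^+}$ and $\tilde{\sigma}_{I^-}$ are distinct elements of $\widetilde{\Sigma}$ (by closure under faces), the vectors $v^\pm$ lie in their respective relative interiors, and $f(v^+) = f(v^-)$ contradicts the disjointness-of-interiors axiom built into the definition of fan data.

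The one delicate point I expect to be the main obstacle is this last step: simpliciality of $\tilde{\sigma}$ (injectivity of $f$ on the cone) is a priori strictly weaker than linear injectivity of $f$ on $\mathrm{span}(\tilde{\sigma})$, and the disjointness-of-interiors clause is exactly what closes the gap. The rest of the argument is a direct unwinding of the definitions of $\tilde{\sigma}^\perp$ and $\Lambda_{Z,M}$ given in the preceding sections.
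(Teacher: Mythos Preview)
Your approach is essentially the paper's, but more careful. The paper's proof is three terse sentences: it asserts that bijectivity of $f\!:\tilde{\sigma}\to f(\tilde{\sigma})$ is equivalent to $\ker f$ meeting $\tilde{\sigma}$ only at the origin, and then says (2) and (3) are equivalent because ``$\tilde{\sigma}$ is an open subset of the conormal to $\tilde{\sigma}^\perp$,'' never mentioning the disjointness-of-interiors axiom. You correctly identify that this axiom is what makes the argument work.

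One presentational wrinkle: in your first paragraph you assert that $\tilde{\sigma}\cap\ker f=\{0\}$ is ``precisely the condition that $f|_{\tilde{\sigma}}$ is injective as a map of cones.'' The forward direction is not automatic: take $\tilde{\sigma}=\R^2_{\ge 0}$ and $f(x,y)=x+y$; then $\ker f$ is the antidiagonal, meeting $\tilde{\sigma}$ only at the origin, yet $f(1,0)=f(0,1)$. This example is of course excluded by the disjointness axiom, and your own sign-splitting argument in the second paragraph is exactly what rules it out. So your proof is correct, but the cleanest packaging is the cycle $(1)\Rightarrow(2)\Rightarrow(3)\Rightarrow(1)$: the first implication is immediate, the second is your disjointness argument, and the third is the trivial observation that injectivity on $\mathrm{span}(\tilde{\sigma})$ restricts to injectivity on $\tilde{\sigma}$. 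As written, you invoke the delicate step once for $(2)\Leftrightarrow(3)$ while implicitly needing it again for $(2)\Rightarrow(1)$.
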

\begin{proof}
As noted above, $\Sigma$ is simplicial if the map $\R^n \to N \otimes \R$ induces always bijections 
on $\tilde{\sigma} \to f(\sigma)$.  This map is the quotient by the cotangent directions to 
$\R^n / M_\R$, so it is a bijection iff said directions are never contained in the cones $\tilde{\sigma}$; i.e. if $\Lambda_{Z, M}$ is 
$\pi$-noncharacteristic.  As $\tilde \sigma$ is an open subset of the conormal to $\tilde{\sigma}^\perp$, the second and third
conditions above are also equivalent. 
\end{proof}

\begin{remark}
When $\Sigma$ is not simplicial, there will be some $\tilde{\sigma}^\perp$ of lower dimension than $\R^n / M_\R $, 
and consequently the corresponding $\tilde \sigma^\perp \cap A_\gamma$ will be generically, but not always, empty.  Correspondingly, the 
$\Sh_{\Lambda_{Z, M, \gamma}}(A_\gamma)$ will certainly depend on $\gamma$.  
\end{remark}

Recall (from \cite{GPS1, GPS2} and references therein) that a Weinstein pair $(Q, R)$ is a Weinstein manifold $Q$ together with a Weinstein hypersurface $R \subset \partial_\infty Q$. 
The relative skeleton of such a pair is the union of the skeleton of $Q$ and the cone of the skeleton on $R$.  
We  understand the cotangent bundle of a noncompact
manifold as an `open Liouville sector' in the sense of \cite{GPS1}. 

\begin{proposition} \label{prop: triviality}
There is a Weinstein pair  $(T^* A, W)$ with relative skeleton $\Lambda_{Z, M}$.  When the fan $\Sigma$ is simplicial, 
there is a Weinstein pair $(T^* A_\gamma, W_\gamma)$ with relative skeleton $\Lambda_{Z, M, \gamma}$, and
a Weinstein deformation equivalence
$(T^* A, W) \simeq (T^* A_\gamma, W_\gamma) \times T^*(\R^n / M_\R)$.  
\end{proposition}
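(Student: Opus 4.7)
The plan is to construct the Weinstein pair $(T^*A, W)$ directly from the combinatorial data of $\widetilde{\Sigma}$, and then, in the simplicial case, exhibit the deformation equivalence through a linear isotopy whose existence is guaranteed by $\pi$-noncharacteristicity.

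For the first assertion, I would take $W$ to be the ideal boundary of $\Lambda_{Z,M}$ inside $\partial_\infty T^*A$, equipped with the Weinstein structure built from the convex polyhedral cones $-\tilde{\sigma}$ on the cotangent side. Concretely, each smooth stratum $\tilde{\sigma}^\perp \times \mathrm{int}(-\tilde{\sigma})$ is a conic Lagrangian in $T^*A$ whose intersection with the sphere bundle is an isotropic submanifold; the strata fit together into a singular Legendrian because the cones $\tilde{\sigma}$ form a fan stratification of $\R^n$. The associated Liouville structure on $W$ is obtained from a convex defining function and the restriction of the canonical $1$-form, and $\Lambda_{Z,M} = 0_A \cup (\R_{>0} \cdot W)$ is then by construction the relative skeleton of $(T^*A, W)$. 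When $\Sigma$ is simplicial, Proposition \ref{prop: simplicial is noncharacteristic} tells us that each $\tilde{\sigma}^\perp \cap A_\gamma$ is a smooth submanifold and that $f(\tilde{\sigma})$ is a simplicial cone, so the same construction applied to $T^*A_\gamma$ produces the pair $(T^*A_\gamma, W_\gamma)$ with relative skeleton $\Lambda_{Z,M,\gamma}$.

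The deformation equivalence is the heart of the matter. Since $\R^n/M_\R$ is a contractible vector space, the torus bundle $\pi \colon A \to \R^n/M_\R$ admits a smooth global trivialization $A \cong A_\gamma \times \R^n/M_\R$, inducing a Liouville isomorphism $T^*A \cong T^*A_\gamma \times T^*(\R^n/M_\R)$ under which the product pair has relative skeleton $\bigcup_{\tilde{\sigma}} (\tilde{\sigma}^\perp \cap A_\gamma) \times (\R^n/M_\R) \times (-\tilde{\sigma})$. To connect this with $\Lambda_{Z,M} = \bigcup_{\tilde{\sigma}} \tilde{\sigma}^\perp \times (-\tilde{\sigma})$, I would use that by $\pi$-noncharacteristicity each affine component of $\tilde{\sigma}^\perp$ is the graph of a linear map $\R^n/M_\R \to A_\gamma$ added to a fiber translate. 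Rescaling these linear graphing maps uniformly by $t \in [0,1]$ gives a one-parameter family of conic Lagrangians interpolating $\Lambda_{Z,M}$ at $t = 1$ with the product Lagrangian at $t = 0$. The cotangent-fiber component $-\tilde{\sigma}$ is preserved throughout, and the same rescaling acts coherently on all $\tilde{\sigma}$ simultaneously because it is a linear operation on the base $\R^n/M_\R$.

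The principal obstacle is promoting this Lagrangian isotopy to a genuine Weinstein deformation equivalence of pairs in the sense of \cite{GPS1, GPS2}. One must verify that the interpolating hypersurfaces $W_t \subset \partial_\infty T^*A$ remain Liouville, that the ambient Liouville form on $T^*A$ can be modified along the isotopy to keep the pair structure, and that the degeneration does not pass through a non-Weinstein configuration. I would expect these points to follow from the fact that the isotopy is generated by a Hamiltonian that is linear in the base variables and constant in the cotangent fiber, together with the standard stability of Weinstein pairs under isotopies of the Liouville hypersurface; making this precise is essentially the only nontrivial technical step.
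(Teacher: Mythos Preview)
Your strategy is sound and parallel to the paper's, but the paper executes both steps via a \emph{handle decomposition} of the stop, and this is precisely what closes the gap you flag at the end. The paper does not try to write down a Liouville structure on $W$ from a ``convex defining function''; instead it observes that $\partial_\infty \mirrorline^n$ admits an explicit Weinstein handle presentation (Bott minima on the negative conormals to codimension-$1$ strata, then $(S^1)^{n-2}$-parameterized $1$-handles for codimension-$2$ strata, etc.), deletes the handles corresponding to strata in $Z$, and passes to the cover to get $W$. This gives the Weinstein structure on $W$ for free, with skeleton $\partial_\infty \Lambda_{Z,M}$ by construction.

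For the deformation equivalence, the paper again works with this handle data rather than with the skeleton directly. The noncharacteristic condition says the attaching Legendrians submerse onto $\R^n/M_\R$, so they form a smooth family over a contractible base and may be isotoped to a constant family. An isotopy of attaching spheres \emph{is} a Weinstein deformation of the resulting handlebody---this is the standard mechanism, and it replaces your final paragraph entirely. By contrast, your linear rescaling of the skeleton is the correct picture at the level of Lagrangians, but (as you note) promoting a motion of singular skeleta to a deformation of Weinstein pairs is not automatic; the handle viewpoint sidesteps this by moving the smooth attaching data instead.

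In short: same overall plan, but the paper's use of the handle presentation of $\mirrorline^n$ is the missing ingredient that simultaneously constructs $W$ as a genuine Weinstein hypersurface and converts the isotopy into a bona fide Weinstein deformation.
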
 
\begin{proof}
 We first construct $W$. 
 The product structure of $\mirrorline^n$ makes it clear that the corresponding singular Legendrian
 $\partial \mirrorline^n \subset S^* (S^1)^n$ is the skeleton of a Weinstein domain $D$; see e.g. the discussion in \cite[Sec. 6.1]{GPS2}.  
 One has the following handle presentation. 
Begin with the negative conormals to the codimension-1 strata of $(S^1)^n$.  This is a disjoint union of $(S^1)^{n-1}$ inside the cosphere
bundle, which naturally thicken to their cotangent bundles.  We view these $(S^1)^{n-1}$ as the Bott minima of some Morse function on $D$. 
The negative conormals to the codimension-2 strata of $(S^1)^n$ arise as the stable flow from a $(S^1)^{n-2}$-parameterized 1-handle attachment; etcetera. 
We define $D_Z$ by attaching only  handles with cores in $\Lambda_Z$. Then $W$ is an appropriate cover of $D_Z$. 

If the fan $\Sigma$ is simplicial, Proposition \ref{prop: simplicial is noncharacteristic} ensures that all this attaching data (a sequence of 
Legendrian manifolds in various contact levels) projects submersively
to $\R^n / M_\R$, hence naturally giving a family of said attaching data.  We take the fiber at $\gamma$ to define $W_\gamma$. 
The base $\R^n / M_\R$ is contractible, so we may isotope the family of attaching data to a constant family. 
\end{proof} 

\begin{remark}
In fact, in \cite{Gammage-Shende, Zhou-skel}, the 
$W_\gamma$ described above are identified with hypersurfaces in $(\C^*)^{n-g}$ with Newton polynomial corresponding 
to the fan data.  
\end{remark}

\begin{theorem} \label{thm: FLTZ}
For $\Sigma$ simplicial, the map $\Sh_{\Lambda_{Z, M}}(A) \to \Sh_{\Lambda_{Z, M, \gamma}}(A_\gamma)$ is an equivalence. 
\end{theorem}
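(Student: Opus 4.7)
The plan is to combine the Weinstein product structure from Proposition \ref{prop: triviality} with invariance of microlocal sheaf categories under Weinstein pair deformation and a Künneth-type splitting. By Proposition \ref{prop: triviality}, the Weinstein pair $(T^*A, W)$ is deformation equivalent to $(T^*A_\gamma, W_\gamma) \times T^*(\R^n/M_\R)$, and under this deformation the relative skeleton $\Lambda_{Z,M}$ is carried to the product Lagrangian $\Lambda_{Z,M,\gamma} \times 0_{\R^n/M_\R}$. The microlocal sheaf category of the pair is invariant under Weinstein deformation by \cite{GPS2, GPS3}, giving
$$\Sh_{\Lambda_{Z,M}}(A) \;\simeq\; \Sh_{\Lambda_{Z,M,\gamma} \times 0_{\R^n/M_\R}}(A_\gamma \times \R^n/M_\R).$$

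A Künneth formula for microlocal sheaves then factors the right-hand side as $\Sh_{\Lambda_{Z,M,\gamma}}(A_\gamma) \otimes \Sh_{0}(\R^n/M_\R)$; since $\R^n/M_\R$ is contractible, the second factor is local systems on a contractible space, i.e.\ $\Mod(k)$. Composing yields the desired identification $\Sh_{\Lambda_{Z,M}}(A) \simeq \Sh_{\Lambda_{Z,M,\gamma}}(A_\gamma)$. To check that this abstract equivalence is the explicit restriction functor, one notes that in the product model restriction to $A_\gamma$ corresponds to restriction along a point inclusion into the contractible factor, which on the local systems side is exactly the canonical equivalence with $\Mod(k)$. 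This can be verified on a generating set such as microlocal skyscrapers at smooth points of $\Lambda_{Z,M,\gamma}$, using that these propagate to sheaves on $A$ via the product structure.

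An alternative, more hands-on approach avoids the Weinstein deformation theory altogether. The $\pi$-noncharacteristic property established in Proposition \ref{prop: simplicial is noncharacteristic} says that $\Lambda_{Z,M}$ contains no covectors to the fibers of $\pi$. By the Kashiwara--Schapira propagation principle \cite[Prop. 5.4.5]{Kashiwara-Schapira}, any $F \in \Sh_{\Lambda_{Z,M}}(A)$ is locally constant along paths in the base $\R^n/M_\R$. Since $\R^n/M_\R$ is contractible, this gives a concrete candidate inverse to restriction: pull back from a fiber via any smooth retraction $r \colon A \to A_\gamma$, with microsupport containment $r^{-1}(\Lambda_{Z,M,\gamma}) \subset \Lambda_{Z,M}$ (up to subcritical isotropics, as in Proposition \ref{prop: remove}) guaranteed by the same deformation-equivalence statement.

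The subtlest step will be matching the abstract product-model equivalence with the functor $F \mapsto F|_{A_\gamma}$, since the Weinstein invariance gives equivalences of global categories that must be shown to intertwine the two restriction maps. In practice this reduces to the compatibility of the Künneth decomposition with evaluation at a basepoint of the contractible factor, which can be made explicit by working out both functors on the generators supplied by microlocalization at smooth points of the Lagrangian. If one instead takes the direct $\pi$-noncharacteristic route, the main work is verifying that the microsupport of the pulled-back sheaf indeed lies in $\Lambda_{Z,M}$; this is where simpliciality of $\Sigma$ is used, via the third clause of Proposition \ref{prop: simplicial is noncharacteristic}.
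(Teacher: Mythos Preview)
Your primary approach---Proposition \ref{prop: triviality} plus deformation invariance of microlocal sheaf categories for Weinstein pairs plus a K\"unneth splitting over the contractible base---is exactly the paper's argument; the paper cites \cite{Zhou-nc} and \cite{Nadler-Shende} for the sheaf-theoretic deformation invariance (or alternatively passes to Fukaya categories via \cite{GPS3}), rather than invoking \cite{GPS2, GPS3} directly. Your additional care in matching the abstract equivalence with the specific restriction functor, and your alternative noncharacteristic-propagation route, go beyond what the paper spells out.
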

\begin{proof}
The point is that sheaf categories are invariant under deformation through Weinstein pairs.  For cotangent bundles
(which covers the present situation) this is \cite{Zhou-nc}; for another argument and more general results see  \cite{Nadler-Shende}.  Alternatively, pass by \cite{GPS3} to Fukaya categories,
where deformation invariance is obvious. Now the result follows from Proposition \ref{prop: triviality}.
\end{proof} 

\begin{remark} Note $M = \pi_1(A) = \pi_1(A_\gamma)$; the equivalence of Theorem \ref{thm: FLTZ} respects the $\Hom(M, \G_m)$ action.
Equivariantization as in Section \ref{sec: quotient}  recovers the main result of \cite{FLTZ2}. 
\end{remark}

\begin{remark}
Rather than one mirror skeleton $\Lambda_{Z, M, 0}$ as in \cite{FLTZ2, Treumann, Ku,  Vaintrob, Zhou-ccc}, we obtain 
a family of them parameterized by $\R^n/M_\R$, hence a local system of categories over this base.  
Points $\gamma \in \R^n/M_\R$ which differ by the image of the integer lattice $\Z^n / M$ 
give the same $\Lambda_{Z, M, \gamma}$, giving an action of $\Z^n/M$ on $\Sh_{\Lambda_{Z, M, \gamma}}(A_\gamma)$.  
Per \cite{Cox}, $\Z^n/M$ is the group of toric 
divisors; and one can see from the considerations of this article that the corresponding action on coherent sheaves 
is tensor product by the corresponding line bundles.  This is a version of the main result of \cite{Hanlon}. 
\end{remark}

\begin{remark}
A key result of \cite{Gammage-Shende} is that the microlocalization 
$\mu: \Sh_{\Lambda_{Z, M, \gamma}}(A_\gamma) \to \mu \Sh(\partial_\infty \Lambda_{Z, M, \gamma})$ is mirror to restriction of coherent
sheaves from a toric variety to its toric boundary.  One can deduce this from Lemma \ref{lem: intertwine} by the methods of
the present article. 
\end{remark}

\begin{remark}
One can study singular varieties by studying the mirror (in terms of $\Lambda_{Z, M, \gamma}$) of 
the effect of resolution of singularities on categories of coherent sheaves, as is done in \cite{Ku}.  It would be interesting to 
directly investigate  the question: what is mirror to forming the coarse moduli space?  
\end{remark}

\vspace{2mm}
{\bf Acknowledgements.}  My thoughts on toric mirror symmetry and equivariance have been influenced by conversations 
with Mina Aganagic, Benjamin Gammage, Tatsuki Kuwagaki, Michael McBreen, and Peng Zhou.  I am especially grateful to 
Justin Hilburn and Nick Rozenblyum for explaining how to prove Lemma \ref{lem: 1-affine}.

My work is partially supported by a Villum Investigator grant, a DNRF chair, a Novo Nordisk start package, and NSF CAREER DMS-1654545. 

\bibliographystyle{plain}
\bibliography{refs}
\end{document}